\documentclass[12pt,letterpaper]{amsart}
\usepackage[breaklinks=true]{hyperref}
\usepackage{geometry}
\geometry{body={7in,9in}, centering}

\theoremstyle{plain}
\newtheorem{theorem}{Theorem}[section]
\newtheorem{lemma}{Lemma}[section]

\theoremstyle{definition}

\newtheorem{remark}{Remark}[section]

\newcommand{\comment}[1]{}

\DeclareMathOperator{\gr}{\ensuremath{gr}}

\DeclareMathOperator{\Sym}{\ensuremath{Sym}}

\DeclareMathOperator{\ind}{\ensuremath{index}}
\DeclareMathOperator{\Frac}{\ensuremath{Frac}}

\begin{document}
\title[KW1 for large characteristic]{A simple proof of the first Kac-Weisfeiler conjecture for algebraic Lie algebras in large characteristics}
\author{Akaki Tikaradze}
\email{ tikar06@gmail.com}
\address{University of Toledo, Department of Mathematics \& Statistics, 
Toledo, OH 43606, USA}

\maketitle
\begin{abstract}
Given a Lie algebra $\mathfrak{g}$ of an algebraic group over a ring $S,$
we show that the first Kac-Weisfeiler conjecture holds for reductions of $\mathfrak{g} \mod p$ for
large enough primes $p,$ reproving
a recent result of Martin, Stewart and Topley \cite{MST}. As a byproduct of our proof, we show that the center of
the  skew field of fractions of the the enveloping algebra $\mathfrak{U}\mathfrak{g}_{\bold{k}}$ for a field $\bold{k}$ of characteristic $p>>0$
is generated by the $p$-center and by the reduction $\mod p$ of the center of the fraction skew field of
$\mathfrak{U}\mathfrak{g}.$

\end{abstract}
\vspace{0.5in}

Recall that given a restricted Lie algebra  $\mathfrak{g}$ over an algebraically closed field
$\bold{k}$ of characteristic $p,$ the first Kac-Weisfeiler
conjecture  \cite{KW} (the KW1 conjecture for short) asserts that the largest dimension of an irreducible
$\mathfrak{g}$-module (to be denoted  from now on by $M(\mathfrak{g})$)  equals $p^{\frac{1}{2}(\dim \mathfrak{g}-\ind(\mathfrak{g}))}$, where
the index of a Lie algebra $\mathfrak{g}$ is defined to be as the smallest possible dimension of the stabilizer $\mathfrak{g}^{\chi}$
for $\chi\in \mathfrak{g}^*$ under the coadjoint action of $\mathfrak{g}$ on  $\mathfrak{g}^*.$
It is well-known that $M(\mathfrak{g})$ equals to the PI-degree of $\mathfrak{U}\mathfrak{g}$, which is also
equal the the rank of $\mathfrak{U}\mathfrak{g}$ over its center $Z(\mathfrak{U}\mathfrak{g}).$
It is also known [\cite{PS}, Remark 5.1] that the following inequality holds
$$p^{\frac{1}{2}(\dim \mathfrak{g}-\ind(\mathfrak{g}))}\leq M(\mathfrak{g}).$$
 It was  also established in \cite{PS} that
the KW1 conjecture  holds for Lie algebras $\mathfrak{g}$ possessing $\chi\in\mathfrak{g}^*$
such that $\mathfrak{g}^{\chi}$ is a toral subalgebra. 

In a very recent paper [\cite{MST}, Theorem 1.1],  the KW1 conjecture was established
for restricted Lie algebras in very large characteristics.
Namely, the authors proved that for any $n$ there exists $p(n)$ such that for any $p>p(n)$
and a restricted subalgebra $\mathfrak{g}\subset \mathfrak{gl}_n(\bold{k})$ over an algebraically closed field $\bold{k}$ of characteristic $p$, the KW1 conjecture holds
for $\mathfrak{g}$. From this it is deduced that given an algebraic Lie algebra $\mathfrak{g}$,
then the  KW1 conjecture holds for all $\mathfrak{g}_{\bold{k}}$ for all $\bold{k}$ of sufficiently large characteristic [\cite{MST}, Theorem 1.2].
 The goal of our short note is to give
an alternative, very short proof of the latter result.
 
 Throughout given a commutative (noncommutative Noetherian) domain $R,$ by $\Frac(R)$ we will denote
its quotient field (skew field) of fractions. 
As usual given a module $M$ over a commutative domain $R$, by the rank of $M$ over $R$ we
will understand the dimension of $M\otimes_R\Frac(R)$ over $\Frac(R).$
Also, given a $p$-restricted Lie algebra $\mathfrak{g}$ over a field $\bold{k}$, by $Z_p(\mathfrak{g})$
we will denote  the central $\bold{k}$-subalgebra of $\mathfrak{U}\mathfrak{g}$
 generated by $g^p-g^{[p]}, g\in \mathfrak{g}_{\bold{k}}.$
Also, for a Lie algebra $\mathfrak{g}$, by $D(\mathfrak{g})$ we will denote the skew filed of fractions of $\mathfrak{U}\mathfrak{g}.$
\begin{theorem}\label{main}
Let $S\subset\mathbb{C}$ be a finitely generated ring.
Let $\mathfrak{g}$ be a Lie algebra of an algebraic group over $S.$
Then for all $p>>0$ and base change $S\to \bold{k}$ to an algebraically closed field of characteristic
$p,$ the KW1 conjecture holds for $\mathfrak{g}_{\bold{k}}=\mathfrak{g}\otimes_S\bold{k}.$

\end{theorem}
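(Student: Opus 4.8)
The plan is to rephrase the conjecture in terms of the skew field $D(\mathfrak{g}_{\bold{k}})$ and then import central elements from characteristic zero. The first step is a reformulation. Since $\mathfrak{U}\mathfrak{g}_{\bold{k}}$ is a Noetherian domain that is finite over the central polynomial subalgebra $Z_p(\mathfrak{g}_{\bold{k}})$, Posner's theorem gives $D(\mathfrak{g}_{\bold{k}})=\mathfrak{U}\mathfrak{g}_{\bold{k}}\otimes_{Z(\mathfrak{U}\mathfrak{g}_{\bold{k}})}\Frac\big(Z(\mathfrak{U}\mathfrak{g}_{\bold{k}})\big)$, a division algebra of degree $M(\mathfrak{g}_{\bold{k}})$ over its centre $Z(D(\mathfrak{g}_{\bold{k}}))=\Frac\big(Z(\mathfrak{U}\mathfrak{g}_{\bold{k}})\big)$, while $[D(\mathfrak{g}_{\bold{k}}):\Frac(Z_p(\mathfrak{g}_{\bold{k}}))]=p^{\dim\mathfrak{g}}$. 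Hence
$$[Z(D(\mathfrak{g}_{\bold{k}})):\Frac(Z_p(\mathfrak{g}_{\bold{k}}))]=\frac{p^{\dim\mathfrak{g}}}{M(\mathfrak{g}_{\bold{k}})^{2}},$$
so the KW1 conjecture for $\mathfrak{g}_{\bold{k}}$ is equivalent to $[Z(D(\mathfrak{g}_{\bold{k}})):\Frac(Z_p(\mathfrak{g}_{\bold{k}}))]=p^{\ind\mathfrak{g}}$. The inequality $\leq$ here is exactly the Premet--Skryabin bound recalled above, so only $\geq$ remains.

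Next I would produce central elements of $D(\mathfrak{g}_{\bold{k}})$ by reduction. Put $K=\Frac(S)$ and $r=\ind\mathfrak{g}$. Because $\mathfrak{g}$ is algebraic, it is classical that $Z(D(\mathfrak{g}_{K}))$ has transcendence degree $r$ over $K$; this is the one external input, and the place where algebraicity is used. Filter $\mathfrak{U}\mathfrak{g}$, and hence $D(\mathfrak{g})$, by the PBW filtration: the associated graded is a localization of $\Sym\mathfrak{g}$ with its Kirillov--Kostant Poisson bracket, $\Frac(\gr D(\mathfrak{g}_{K}))=K(\mathfrak{g}^{*})$, and the symbol map sends $Z(D(\mathfrak{g}_{K}))$ into the Poisson centre. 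Since passage to $\gr$ preserves transcendence degree, one can choose $w_{1},\dots,w_{r}\in Z(D(\mathfrak{g}_{K}))$ whose symbols $\sigma(w_{i})\in K(\mathfrak{g}^{*})$ are algebraically independent over $K$. After enlarging $S$ to a finitely generated localization, write $w_{i}=a_{i}b_{i}^{-1}$ with $a_{i},b_{i}\in\mathfrak{U}\mathfrak{g}_{S}$; the centrality of $w_{i}$ amounts to the finitely many polynomial identities $a_{i}b_{i}=b_{i}a_{i}$ and $a_{i}gb_{i}=b_{i}ga_{i}$ ($g$ in a fixed $S$-basis $g_{1},\dots,g_{n}$ of $\mathfrak{g}$), which therefore hold already in $\mathfrak{U}\mathfrak{g}_{S}$.

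The third step is the reduction modulo $p$ and the degree count. For all $p\gg0$ and every base change $S\to\bold{k}$, the reductions $\bar a_{i},\bar b_{i}\in\mathfrak{U}\mathfrak{g}_{\bold{k}}$ satisfy $\bar b_{i}\neq0$ together with the above identities, so $\bar w_{i}:=\bar a_{i}\bar b_{i}^{-1}$ is a well-defined central element of $D(\mathfrak{g}_{\bold{k}})$; moreover $\gr D(\mathfrak{g}_{\bold{k}})$ again has fraction field $\bold{k}(\mathfrak{g}^{*})$, the symbol map carries $\Frac(Z_p(\mathfrak{g}_{\bold{k}}))$ into $\bold{k}(\mathfrak{g}^{*})^{p}$, and $\sigma(\bar w_{i})$ is the reduction of $\sigma(w_{i})$. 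Each $\sigma(\bar w_{i})^{p}$ lies in $\bold{k}(\mathfrak{g}^{*})^{p}$, so $\bold{k}(\mathfrak{g}^{*})^{p}(\sigma(\bar w_{1}),\dots,\sigma(\bar w_{r}))$ is purely inseparable of exponent $\leq1$ over $\bold{k}(\mathfrak{g}^{*})^{p}$, of degree exactly $p^{r}$ provided $d\sigma(\bar w_{1}),\dots,d\sigma(\bar w_{r})$ are $\bold{k}(\mathfrak{g}^{*})$-linearly independent, i.e. provided some $r\times r$ minor of the Jacobian $\big(\partial\sigma(w_{i})/\partial g_{j}\big)$, with the $g_{j}$ viewed as linear coordinates on $\mathfrak{g}^{*}$, is nonzero modulo $p$; that minor is a fixed element of $K(\mathfrak{g}^{*})$, nonzero because the $\sigma(w_{i})$ are algebraically independent over the characteristic-zero field $K$, so it survives reduction for $p\gg0$. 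A routine associated-graded argument, lifting a $\bold{k}(\mathfrak{g}^{*})^{p}$-independent family of monomials in the $\sigma(\bar w_{i})$ to $\Frac(Z_p(\mathfrak{g}_{\bold{k}}))$-independent elements of $Z(D(\mathfrak{g}_{\bold{k}}))$, then yields $[Z(D(\mathfrak{g}_{\bold{k}})):\Frac(Z_p(\mathfrak{g}_{\bold{k}}))]\geq p^{r}$, which with the Premet--Skryabin bound forces equality and proves KW1. The same bookkeeping shows $Z(D(\mathfrak{g}_{\bold{k}}))=\Frac(Z_p(\mathfrak{g}_{\bold{k}}))(\bar w_{1},\dots,\bar w_{r})$, i.e. $Z(D(\mathfrak{g}_{\bold{k}}))$ is generated by the $p$-centre together with the reduction modulo $p$ of $Z(D(\mathfrak{g}))$, which is the byproduct stated in the abstract.

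The main obstacle is purely one of organizing the passage from $K$ to $\bold{k}$: one must verify that the relevant bad loci, namely the vanishing of the denominators $b_{i}$, the failure of the finitely many centrality identities, the drop of the chosen Jacobian minor, and any filtration jump of the PBW filtration on $D(\mathfrak{g}_{\bold{k}})$ that would make $\sigma(\bar w_{i})$ cease to be the reduction of $\sigma(w_{i})$, are each cut out by finitely many closed conditions over the finitely generated ring $S$, hence avoided for all sufficiently large $p$. This is routine but not entirely vacuous; everything else is soft once the classical fact $\operatorname{tr.deg}_{K}Z(D(\mathfrak{g}_{K}))=\ind\mathfrak{g}$ for algebraic $\mathfrak{g}$ is granted.
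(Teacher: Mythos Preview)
Your proof is correct and follows the same overall architecture as the paper: reformulate KW1 as a degree computation for $D(\mathfrak{g}_{\bold{k}})$ over its centre, import a transcendence basis of central elements from characteristic zero, pass to the associated graded, bound the rank there, and close with the Premet--Skryabin inequality. The differences are in the implementation. First, the paper runs the characteristic-zero step in the opposite direction: it starts with algebraically independent $G$-invariant rational functions $f_i/g_i$ on $\mathfrak{g}^*_{\mathbb{C}}$ and lifts them to $Z(D(\mathfrak{g}))$ via the symmetrization map, using that numerator and denominator are semi-invariants for the same character; you instead start in $Z(D(\mathfrak{g}_K))$ and take symbols. Second, for the key rank estimate the paper proves a short commutative-algebra lemma via Noether normalization, showing that $A_{\bold{k}}$ has rank exactly $p^{\dim A-\dim B}$ over $B_{\bold{k}}A_{\bold{k}}^p$, whereas you use the Jacobian/$p$-basis criterion directly. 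Both are standard; your Jacobian argument is perhaps more transparent, while the paper's route sidesteps the issue you flag in your last paragraph---the filtration on the skew field $D(\mathfrak{g}_{\bold{k}})$ and possible symbol jumps---by applying its filtered-to-graded rank lemma at the level of the algebra $\mathfrak{U}\mathfrak{g}_{\bold{k}}$ and its centre rather than at the level of fraction fields.
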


Along the way of proving Theorem \ref{main}, we also show the following.
\begin{theorem}\label{center}
Let $ \mathfrak{g}$ be a Lie algebra of an algebraic group over $S\subset\mathbb{C.}$
Then  the center of $D(\mathfrak{g}_{\bold{k}})$ is  generated by
$Z_p(\mathfrak{g}_{\bold{k}})$ and the image of the center of $D(\mathfrak{g})$
in $Z(D(\mathfrak{g}_{\bold{k}})).$

\end{theorem}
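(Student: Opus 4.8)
The plan is to prove Theorems~\ref{main} and~\ref{center} at the same time, the crux being a transcendence count that passes through the symbol (associated graded) map. After inverting finitely many elements of $S$ (harmless, since this discards only finitely many $p$) I may work with $\mathfrak{U}\mathfrak{g}_S$, free over $S$ on a PBW basis, with $\mathfrak{U}\mathfrak{g}_S\otimes_S\bold{k}=\mathfrak{U}\mathfrak{g}_{\bold{k}}$. I will use the classical facts that $\text{trdeg}_{\Frac(S)}Z(D(\mathfrak{g}))=\ind(\mathfrak{g})=:\ell$ and that the symbol map $\sigma$ (leading term for the order filtration) sends $Z(D(\mathfrak{g}))$ into the Poisson centre of $\Frac(\Sym\mathfrak{g})$ without dropping transcendence degree; so I fix a transcendence basis $z_1,\dots,z_\ell$ of $Z(D(\mathfrak{g}))$ over $\Frac(S)$ whose symbols $\sigma(z_i)\in\Frac(\Sym\mathfrak{g}_S)$ are algebraically independent. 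Writing $z_i=a_ib_i^{-1}$ with $a_i,b_i\in\mathfrak{U}\mathfrak{g}_S$, the finitely many identities encoding centrality of each $z_i$ have $S$-coefficients, so for $p\gg0$ the reductions $\bar z_i:=\bar a_i\bar b_i^{-1}\in D(\mathfrak{g}_{\bold{k}})$ are defined and central, with $\deg\bar a_i=\deg a_i$ and $\deg\bar b_i=\deg b_i$, whence $\sigma(\bar z_i)=\overline{\sigma(z_i)}$. This pins down a model of ``the image of $Z(D(\mathfrak{g}))$'', and the inclusion $\subseteq$ in Theorem~\ref{center} is then immediate.

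For the reverse inclusion I first record the easy half of a degree count. By PBW, $\mathfrak{U}\mathfrak{g}_{\bold{k}}$ is free of rank $p^{\dim\mathfrak{g}}$ over $Z_p(\mathfrak{g}_{\bold{k}})$, so $[D(\mathfrak{g}_{\bold{k}}):\Frac(Z_p(\mathfrak{g}_{\bold{k}}))]=p^{\dim\mathfrak{g}}$; since $D(\mathfrak{g}_{\bold{k}})$ is a division ring with centre containing $\Frac(Z_p(\mathfrak{g}_{\bold{k}}))$ and of dimension $M(\mathfrak{g}_{\bold{k}})^2$ over it, $M(\mathfrak{g}_{\bold{k}})^2\cdot[Z(D(\mathfrak{g}_{\bold{k}})):\Frac(Z_p(\mathfrak{g}_{\bold{k}}))]=p^{\dim\mathfrak{g}}$. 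For $p\gg0$ one has $\dim\mathfrak{g}_{\bold{k}}=\dim\mathfrak{g}$ and $\ind\mathfrak{g}_{\bold{k}}=\ind\mathfrak{g}$ (the generic rank of $(x,y)\mapsto\chi([x,y])$ is the nonvanishing of a minor with $S$-coefficients, which survives mod $p$ and cannot jump up). The inequality $M(\mathfrak{g}_{\bold{k}})\ge p^{\frac12(\dim\mathfrak{g}-\ind\mathfrak{g})}$ of \cite{PS} then yields $[Z(D(\mathfrak{g}_{\bold{k}})):\Frac(Z_p(\mathfrak{g}_{\bold{k}}))]\le p^{\ell}$.

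The heart is to produce a subfield of $Z(D(\mathfrak{g}_{\bold{k}}))$ of degree $\ge p^{\ell}$ over $\Frac(Z_p(\mathfrak{g}_{\bold{k}}))$; the $\bar z_i$ will do it. Give $D(\mathfrak{g}_{\bold{k}})$ the filtration induced from the order filtration on $\mathfrak{U}\mathfrak{g}_{\bold{k}}$, so that $\gr D(\mathfrak{g}_{\bold{k}})$ is the graded fraction field $E$ of $\Sym\mathfrak{g}_{\bold{k}}$ and $\gr\Frac(Z_p(\mathfrak{g}_{\bold{k}}))$ is its subfield $E^{(p)}=\Frac(\bold{k}[x_1^p,\dots,x_n^p])$ (as $\sigma(g^p-g^{[p]})=g^p$), with $[E:E^{(p)}]=p^{\dim\mathfrak{g}}$. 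Since $[\gr V:\gr W]\le[V:W]$ for a filtered division ring $V$ and a filtered central subfield $W$, it suffices to show $[E^{(p)}(\overline{\sigma(z_1)},\dots,\overline{\sigma(z_\ell)}):E^{(p)}]\ge p^{\ell}$. In characteristic $0$, algebraic independence of $\sigma(z_1),\dots,\sigma(z_\ell)$ means some $\ell\times\ell$ minor $\Delta$ of the Jacobian $(\partial\sigma(z_i)/\partial x_j)$ is nonzero; but $\Delta\in\Frac(\Sym\mathfrak{g}_S)$, so for $p\gg0$ its reduction stays nonzero in $E$, and hence $d\overline{\sigma(z_1)},\dots,d\overline{\sigma(z_\ell)}$ are $E$-linearly independent in $\Omega_{E/\bold{k}}$. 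As $\overline{\sigma(z_i)}^{\,p}\in E^{(p)}$, this says precisely that $\overline{\sigma(z_1)},\dots,\overline{\sigma(z_\ell)}$ are $p$-independent over $\bold{k}$, so $[E^{(p)}(\overline{\sigma(z_1)},\dots,\overline{\sigma(z_\ell)}):E^{(p)}]=p^{\ell}$. Therefore $p^{\ell}\le[\Frac(Z_p(\mathfrak{g}_{\bold{k}}))(\bar z_1,\dots,\bar z_\ell):\Frac(Z_p(\mathfrak{g}_{\bold{k}}))]\le[Z(D(\mathfrak{g}_{\bold{k}})):\Frac(Z_p(\mathfrak{g}_{\bold{k}}))]\le p^{\ell}$, which forces equality throughout: $Z(D(\mathfrak{g}_{\bold{k}}))$ is generated by $Z_p(\mathfrak{g}_{\bold{k}})$ and the $\bar z_i$ (hence by $Z_p(\mathfrak{g}_{\bold{k}})$ and the image of $Z(D(\mathfrak{g}))$), which is Theorem~\ref{center}, and $M(\mathfrak{g}_{\bold{k}})^2=p^{\dim\mathfrak{g}-\ind\mathfrak{g}}$, which is the KW1 conjecture, Theorem~\ref{main}.

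I expect the main obstacle to lie in the two specialisation-in-$p$ steps — that a transcendence basis of $Z(D(\mathfrak{g}))$ reduces to central elements with the same symbols, and that a nonvanishing Jacobian minor stays nonvanishing modulo $p$ — together with the characteristic-zero input that the symbols of $Z(D(\mathfrak{g}))$ span a subfield of $\Frac(\Sym\mathfrak{g})$ of transcendence degree $\ind(\mathfrak{g})$ (so that one may choose the $z_i$ with algebraically independent symbols); granting these, what remains is the degree bookkeeping above.
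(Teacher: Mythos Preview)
Your argument is correct and follows the same overall architecture as the paper: squeeze $[Z(D(\mathfrak{g}_{\bold{k}})):\Frac Z_p(\mathfrak{g}_{\bold{k}})]$ between an upper bound coming from the Premet--Skryabin inequality and a lower bound coming from the associated graded, forcing equality and yielding both theorems at once. The differences are in how each step is executed. First, the paper builds its central elements from the other side: it takes algebraically independent rational invariants $f_i/g_i\in\mathbb{C}(\mathfrak{g}^*)^G$, lifts $f_i,g_i$ to $\mathfrak{U}\mathfrak{g}$ via the symmetrization map, and uses a semi-invariance argument (the existence of a character $\chi$ for which $f_i,g_i$ are $\chi$-semi-invariants) to conclude that the resulting quotients lie in $Z(D(\mathfrak{g}))$; you instead start from a transcendence basis of $Z(D(\mathfrak{g}))$ and quote the equality $\text{trdeg}\,Z(D(\mathfrak{g}))=\ind(\mathfrak{g})$ as a black box. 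Second, and more substantively, for the key lower bound the paper proves a small commutative-algebra lemma (Lemma~\ref{rank}) via Noether normalization to show that $A_{\bold{k}}$ has rank exactly $p^{\dim A-\dim B}$ over $B_{\bold{k}}A_{\bold{k}}^p$, whereas you get the same conclusion by reducing a nonvanishing Jacobian minor modulo $p$ and invoking $p$-independence. Your Jacobian route is clean and makes the ``$p$ large enough'' condition very explicit (a single minor must survive), while the paper's route is more self-contained and makes the role of the algebraic group $G$ visible in the construction rather than hidden in the cited transcendence-degree equality.
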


For the proof we will need to recall couple of simple lemmas from commutative algebra.
\begin{lemma}\label{rank}
Let $S\subset\mathbb{C}$ be a finitely generated ring.
Let $A$ be a finitely generated commutative algebra over $S$ such that $A_{\mathbb{C}}$ is a domain.
 Let $B\subset A$ be a finitely generated  $S$-subalgebra.
Then for all $p>>0$ and a base change $S\to\bold{k}$ to an algebraically closed field $\bold{k}$ of characteristic $p$
the rank of $A_{\bold{k}}$ over $B_{\bold{k}}A_{\bold{k}}^p$ is  $p^{\dim(A)-\dim(B)}.$
\end{lemma}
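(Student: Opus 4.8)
\emph{Proof proposal.}

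The plan is to rewrite the rank as the degree of a purely inseparable field extension, split that degree through the intermediate ring $B_{\bold k}A_{\bold k}^p$, and supply the one genuinely non-formal input --- that finitely many elements of $B$ which are algebraically independent in characteristic zero remain ``$p$-independent'' after reduction --- by a routine spreading-out argument.

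First I would make the reductions well behaved. By generic flatness, constructibility of fibre dimension, and geometric integrality of the generic fibre after a localization, we may replace $S$ by a localization so that, for every base change $S\to\bold k$ to an algebraically closed field of characteristic $p\gg 0$: $A_{\bold k}$ is a domain with $\dim A_{\bold k}=\dim A$; the inclusion $B_{\bold k}\subseteq A_{\bold k}$ persists with $B_{\bold k}$ a domain of dimension $\dim B$; and $\Omega_{A_{\bold k}/\bold k}$ is the base change of $\Omega_{A_S/S}$, of generic rank $\dim A$. Since $\bold k$ is perfect, Frobenius identifies $A_{\bold k}$ with $A_{\bold k}^p$, the extension $A_{\bold k}^p\subseteq A_{\bold k}$ is finite, $\Frac(A_{\bold k}^p)=\Frac(A_{\bold k})^p$, and --- $\Frac(A_{\bold k})/\bold k$ being separably generated (MacLane) --- $[\Frac(A_{\bold k}):\Frac(A_{\bold k})^p]=p^{\operatorname{tr.deg}_{\bold k}\Frac(A_{\bold k})}=p^{\dim A}$; thus the rank of $A_{\bold k}$ over $A_{\bold k}^p$ is $p^{\dim A}$. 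Writing $C=B_{\bold k}A_{\bold k}^p$, a domain with $A_{\bold k}^p\subseteq C\subseteq A_{\bold k}$, and using that the rank of $A_{\bold k}$ over $C$ equals $[\Frac(A_{\bold k}):\Frac(C)]$, multiplicativity of field degrees reduces the lemma to proving $[\Frac(C):\Frac(A_{\bold k}^p)]=p^{\dim B}$.

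The inequality ``$\le$'' holds for all $p$: inside $\Frac(A_{\bold k})$ one has the compositum $\Frac(C)=\Frac(A_{\bold k}^p)\cdot\Frac(B_{\bold k})$, and since $\Frac(B_{\bold k})^p=\Frac(B_{\bold k}^p)\subseteq\Frac(A_{\bold k}^p)$, this compositum has degree at most $[\Frac(B_{\bold k}):\Frac(B_{\bold k})^p]=p^{\dim B}$ over $\Frac(A_{\bold k}^p)$. For the reverse inequality I would fix elements $b_1,\dots,b_d\in B_S$ ($d=\dim B$) whose images in $B_{\mathbb C}$ are algebraically independent over $\mathbb C$; these exist because a finite generating set of the domain $B_{\mathbb C}$ contains a transcendence basis. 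In characteristic $0$ algebraic independence forces $db_1,\dots,db_d$ to be linearly independent in the module of K\"ahler differentials $\Omega_{\Frac(A_{\mathbb C})/\mathbb C}$, which is witnessed by the non-vanishing of a suitable $d\times d$ minor built from polynomial partial derivatives, hence by one nonzero element of $\Frac(A_S)$. After clearing denominators and reducing modulo $p$, for $p\gg 0$ this minor stays nonzero in $\Frac(A_{\bold k})$ (using base change of $\Omega$), so $db_1,\dots,db_d$ remain linearly independent in $\Omega_{\Frac(A_{\bold k})/\bold k}=\Omega_{\Frac(A_{\bold k})/\Frac(A_{\bold k})^p}$; by the standard correspondence between linear independence of differentials and $p$-independence (valid as $\bold k$ is perfect) this says precisely that $[\Frac(A_{\bold k})^p(b_1,\dots,b_d):\Frac(A_{\bold k})^p]=p^d$. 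Since $\Frac(A_{\bold k}^p)(b_1,\dots,b_d)\subseteq\Frac(C)$, we get $[\Frac(C):\Frac(A_{\bold k}^p)]\ge p^d=p^{\dim B}$, and dividing $p^{\dim A}$ by $p^{\dim B}$ yields the lemma.

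The main obstacle, I expect, is the book-keeping of ``$p\gg 0$'' rather than the algebra: one must guarantee simultaneously that $A_{\bold k}$ stays a domain of dimension $\dim A$, that $B_{\bold k}\subseteq A_{\bold k}$ persists, and that the Jacobian minor above does not vanish modulo $p$. This really does require passing to a localization of $S$ --- along a bad specialization $A_{\bold k}$ may become non-reduced and the formula then fails --- but this is standard spreading-out; the remainder of the argument is formal manipulation with purely inseparable extensions.
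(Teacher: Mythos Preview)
Your argument is correct, but it takes a genuinely different route from the paper's. You pass to fraction fields, use $[\Frac(A_{\bold k}):\Frac(A_{\bold k})^p]=p^{\dim A}$, and then pin down $[\Frac(B_{\bold k}A_{\bold k}^p):\Frac(A_{\bold k}^p)]=p^{\dim B}$ via the $p$-basis/K\"ahler differential dictionary, the key input being that a single Jacobian minor witnessing the algebraic independence of $b_1,\dots,b_d$ survives reduction mod $p$. The paper instead applies Noether normalization to write $A$ as a module with $m$ generators over $B[x_1,\dots,x_n]$, $n=\dim A-\dim B$, and then runs a divisibility count in the tower $B_{\bold k}[x_1^p,\dots,x_n^p]\subseteq B_{\bold k}A_{\bold k}^p\subseteq A_{\bold k}$: the sought rank is a $p$-power (being a factor of $[\Frac(A_{\bold k}):\Frac(A_{\bold k})^p]$) dividing $p^n l$ with $l\le m<p$, hence dividing $p^n$, and it cannot drop below $p^n$ because the complementary factor is bounded by $m$. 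Your approach makes the meaning of ``$p\gg 0$'' completely explicit (it is the non-vanishing of one element of $A$) and is arguably more conceptual, at the price of invoking MacLane separability, $p$-bases, and base change of $\Omega$; the paper's proof is more elementary, using only Noether normalization and the fact that intermediate degrees in a height-one purely inseparable extension are $p$-powers, with ``$p>m$'' as the sole largeness condition beyond the implicit spreading-out.
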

\begin{proof}
By localizing $B$ if necessary, we may assume by the Noether  normalization lemma  that
$A$ is a finite module over $B[x_1,\cdots, x_n]$, where $ x_1,\cdots x_n\in A$ are algebraically independent over $B.$
Let $m$ be a number of generators of $A$ as a module over $B[x_1,\cdots, x_n].$ Clearly $n=\dim(A)-\dim(B).$
Now let $S\to\bold{k}$ be a base change to an algebraically closed field of characteristic $p>>0$ (in particular $p>m$.)
So $A_{\bold{k}}$ is a finite  module with (at most) $m$ generators over $B_{\bold{k}}[x_1,\cdots, x_n].$
Clearly the rank of $A_{\bold{k}}$ over $B_{\bold{k}}[x_1^p,\cdots, x_n^p]\subset B_{\bold{k}}A_{\bold{k}}^p$
is $p^nl,$ where $l\leq m$ is the rank of $A_{\bold{k}}$ over $B_{\bold{k}}[x_1,\cdots, x_n].$
On the other hand, the rank $A_{\bold{k}}$ over $A_{\bold{k}}^p$ is a power of $p.$
So, the rank of $A_{\bold{k}}$ over $B_{\bold{k}}A_{\bold{k}}^p$ is a power of $p$ and divides $p^nl$,
hence it must divide $p^n.$ However, the rank of $B_{\bold{k}}A_{\bold{k}}^p$ over $B_{\bold{k}}[x_1^p,\cdots, x_n^p]$
is at most $m.$ Thus the rank of $A_{\bold{k}}$ over $B_{\bold{k}}A_{\bold{k}}^p$ is  $p^{\dim(A)-\dim(B)}.$
\end{proof}

\begin{lemma}\label{Gr}
Let $R$ be a nonnegatively filtered commutative  algebra over a field $\bold{k}$ such that 
$\gr(R)$ is a finitely generated $\bold{k}$-domain.
Let $M$ be an $R$-module equipped with a compatible filtration such that $\gr(M)$ is a finitely generated
$\gr(R)$-module. Then the rank of $M$ over $R$ equals to the rank of $\gr(M)$ over $\gr(R).$
\end{lemma}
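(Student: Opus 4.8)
The plan is to introduce the Rees algebra attached to the filtration, which links $R$ and $\gr(R)$ through a single distinguished nonzerodivisor, and to compute the rank of the Rees module of $M$ by localizing at each of the two relevant generic points. First I would record the routine consequences of the hypotheses: since the filtration on $R$ is nonnegative it is separated, so $\gr(R)$ being a domain makes $R$ a domain, $\gr(R)$ being a Noetherian $\bold k$-algebra makes $R$ Noetherian, and $\gr(M)$ being a finitely generated $\gr(R)$-module makes $M$ a finitely generated $R$-module (in each case lift symbols of a finite set of homogeneous generators and induct down the filtration). Then set
\[
\widetilde R=\bigoplus_{i\ge 0}F_iR\cdot t^i\ \subseteq\ R[t],
\qquad
\widetilde M=\bigoplus_{i\ge 0}F_iM\cdot t^i\ \subseteq\ M[t].
\]
This is a $\Z_{\ge 0}$-graded ring and a graded module over it, with $\widetilde R/t\widetilde R\cong\gr(R)$, $\widetilde M/t\widetilde M\cong\gr(M)$, $\widetilde R[t^{-1}]\cong R[t,t^{-1}]$ and $\widetilde M[t^{-1}]\cong M[t,t^{-1}]$; moreover, since $\widetilde R\subseteq R[t]$ and $\widetilde M\subseteq M[t]$, the element $t$ is a nonzerodivisor on both $\widetilde R$ and $\widetilde M$. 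A routine graded argument then gives that $\widetilde R$ is Noetherian (its degree-zero piece $F_0R=\gr_0(R)$ is a finitely generated $\bold k$-algebra and $\widetilde R$ is finitely generated over it) and that $\widetilde M$ is a finitely generated $\widetilde R$-module (its reduction modulo $t$ is finitely generated over $\widetilde R/t\widetilde R$, $\widetilde M$ is bounded below in degree, and $t$ acts injectively, so graded Nakayama applies).

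Now put $\rho=\operatorname{rank}_{\widetilde R}\widetilde M$. On one hand, $\rho$ is unchanged by inverting $t$ (localization does not affect rank), so $\rho=\operatorname{rank}_{R[t,t^{-1}]}M[t,t^{-1}]$, and since $\Frac(R[t,t^{-1}])=\Frac(R)(t)$ while $M[t,t^{-1}]\otimes_{R[t,t^{-1}]}\Frac(R)(t)=(M\otimes_R\Frac(R))\otimes_{\Frac(R)}\Frac(R)(t)$, this equals $\operatorname{rank}_RM$. On the other hand, $\mathfrak p:=t\widetilde R$ is a prime ideal of $\widetilde R$ (the quotient $\gr(R)$ is a domain) containing a nonzerodivisor, hence of height one, so $\widetilde R_{\mathfrak p}$ is a Noetherian local domain whose maximal ideal $t\widetilde R_{\mathfrak p}$ is principal, i.e.\ a discrete valuation ring; the module $\widetilde M_{\mathfrak p}$ is finitely generated over this DVR and $t$ remains a nonzerodivisor on it, so it is torsion free, hence free, and its rank equals $\rho$. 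Reducing modulo $t$,
\[
\gr(M)\otimes_{\gr(R)}\kappa(\mathfrak p)\ \cong\ \widetilde M_{\mathfrak p}/t\widetilde M_{\mathfrak p}\ \cong\ \kappa(\mathfrak p)^{\rho},
\qquad
\kappa(\mathfrak p)=\widetilde R_{\mathfrak p}/t\widetilde R_{\mathfrak p}=\Frac(\gr(R)),
\]
so $\operatorname{rank}_{\gr(R)}\gr(M)=\rho$ as well, and the two ranks coincide.

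I expect the only genuinely delicate point to be this last specialization: the fibre dimension of a finitely generated module is in general merely upper semicontinuous, so a priori $\operatorname{rank}_{\gr(R)}\gr(M)$ could strictly exceed $\rho$ (witness $\widetilde R\oplus\widetilde R/t\widetilde R$, which has rank $1$ over $\widetilde R$ but rank $2$ over $\widetilde R/t\widetilde R$). What rules the jump out is exactly that $\widetilde M$ is $t$-torsion free, which over the DVR $\widetilde R_{\mathfrak p}$ upgrades ``torsion free'' to ``free'' and forces the fibre dimension to be locally constant at $\mathfrak p$. Everything else---the two filtered reductions, the Rees identities, the Noetherian and finiteness claims, and the invariance of rank under localization---is standard.
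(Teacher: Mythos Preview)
Your argument is correct. The paper does not give its own proof of this lemma but simply cites [\cite{T}, Lemma~2.3] (and [\cite{G}, Lemma~6.2] for a noncommutative version), so there is no in-paper argument to compare against; your Rees-algebra proof---comparing the generic fibre $t\neq 0$ with the special fibre $t=0$, and using that $\widetilde R_{(t)}$ is a DVR on which the $t$-torsion-free module $\widetilde M_{(t)}$ is free---is exactly the standard technique behind such statements. One small technical remark: writing $\widetilde M=\bigoplus_{i\ge 0}F_iM\cdot t^i$ and identifying $\widetilde M/t\widetilde M$ with $\gr(M)$ tacitly assumes the filtration on $M$ is itself nonnegative; this is harmless, since finite generation of $\gr(M)$ over the nonnegatively graded domain $\gr(R)$ forces $\gr(M)$ to be bounded below, and after a shift one may take $F_{-1}M=0$.
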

For a proof of Lemma \ref{Gr} see for example [\cite{T}, Lemma 2.3]. For a more general
result covering noncommutative algebras see [\cite{G}, Lemma 6.2].

\begin{proof}[Proof of Theorems \ref{main} and \ref{center}]
Let $G$ be a connected algebraic group corresponding to $\mathfrak{g}_{\mathbb{C}}.$
Let $m$ be the index of $\mathfrak{g}_{\mathbb{C}}.$
 As it is well-known $\mathbb{C}(\mathfrak{g}_{\mathbb{C}}^*)^G$ has the transcendence degree $m$ over $\mathbb{C}.$
Let 
$$\lbrace \frac{f_i}{g_i}\in \mathbb{C}(\mathfrak{g}_{\mathbb{C}}^*)^G, f_i, g_i\in \mathbb{C}[\mathfrak{g}^*_{\mathbb{C}}], 1\leq i\leq m\rbrace $$
 be algebraically independent elements,
written as reduced fractions. 
By localizing $S$ if necessary, we may assume that $f_i, g_i\in \Sym[\mathfrak{g}], 1\leq i\leq m.$
Denote by $\phi_i$ (respectively $\psi_i$) the image of $f_i$ (resp. $g_i$)
under the symmetrization map $\Sym(\mathfrak{g})\to \mathfrak{U}\mathfrak{g}$ (localizing $S$ further if necessary.)
Then we claim that $\frac{\phi_i}{\psi_i}\in Z(D(\mathfrak{g})), 1\leq i\leq m.$
Indeed, it follows that there is a character $\chi:G\to \mathbb{C}^*$, such that $f_i, g_i$
are semi-invariants of $G$ with respect to $\chi.$ Hence so are $\phi_i, \psi_i$. Which
implies that $\frac{\phi_i}{g_i}\in D(\mathfrak{g})^G.$
Let $p>>0$ and $S\to\bold{k}$ be a base change to an algebraically closed field $\bold{k}$ of characteristic $p.$
Then it follows from elementary linear algebra that $m=\ind(\mathfrak{g}_{\bold{k}}).$

To establish the KW1 conjecture for $\mathfrak{g}_{\bold{k}}$, we need to show that the rank of  $\mathfrak{U}(\mathfrak{g}_{\bold{k}})$ over
$Z(\mathfrak{U}\mathfrak{g}_{\bold{k}})$ is at most $p^{\dim\mathfrak{g}_{\bold{k}}-m}.$
For this purpose, it is enough to show that the  rank of $\Sym(\mathfrak{g}_{\bold{k}})$ as $\gr(Z(\mathfrak{U}\mathfrak{g}_{\bold{k}}))$
module is at most $p^{\dim\mathfrak{g}_{\bold{k}}-m}$ by Lemma  \ref{Gr}.
Let $z_i\in \mathfrak{U}\mathfrak{g}_{\bold{k}}$ be such that $\psi_iz_i\in Z(\mathfrak{U}\mathfrak{g}_{\bold{k}}).$ So $\phi_iz_i\in Z(\mathfrak{U}\mathfrak{g}_{\bold{k}}).$
Now since $\gr(\phi_i)\gr(z_i), \gr(\psi_i)\gr(z_i)\in \gr Z(\mathfrak{U}\mathfrak{g}_{\bold{k}}),$
it follows that 
$$\frac{f_i}{g_i}\in \Frac( \gr Z(\mathfrak{U}\mathfrak{g}_{\bold{k}})), 1\leq i\leq m.$$
 Put $A=\Sym(\mathfrak{g})[g_1^{-1},\cdots, g_m^{-1}]$ and $B=S[\frac{f_1}{g_1}, \cdots, \frac{f_m}{g_m}].$
As $\Frac(\gr Z(\mathfrak{U}\mathfrak{g}_{\bold{k}})$)  contains  $A_{\bold{k}}^pB_{\bold{k}},$
it suffices to show that the rank of $A_{\bold{k}}$ over $A_{\bold{k}}^pB_{\bold{k}}$ is at most $p^{\dim\mathfrak{g}_{\bold{k}}-m},$
which follows from  Lemma \ref{rank}.
To summarize, we proved that the degree of $D(\mathfrak{g}_{\bold{k}})$
over the field $K$ generated by $Z_p(\mathfrak{g})$ and $Z(D(\mathfrak{g}))$
is $p^{\dim \mathfrak{g}_{\bold{k}}-m}$. On the other hand, the degree of $D(\mathfrak{g}_{\bold{k}})$
over its center is at least $p^{\dim \mathfrak{g}_{\bold{k}}-m}.$ Thus $Z(D(\mathfrak{g}_{\bold{k}}))=K$
as desired.

\end{proof}
\begin{remark}
The statement of Theorem \ref{center}  and the following context for it was suggested  to us by L.Topley.
Let $\mathfrak{g}$ be a Lie algebra of an algebraic group over $\mathbb{Z}.$ Then it is natural
to ask whether the center of $\mathfrak{U}\mathfrak{g}_p$ is generated by
$Z_p(\mathfrak{g}_p)$ and  the reduction of the center of $\mathfrak{U}\mathfrak{g} \mod p$
(as conjectured by Kac in \cite{K}). Unfortunately this is not always true: Let $\mathfrak{g}$
be a three dimensional Lie algebra spanned by $h, x, y$ with relations 
$$[h, x]=nx, [h, y]=my, [x, y]=0, n, m\in\mathbb{Z}_{>0}, (n, m)=1.$$
Then the center of $\mathfrak{U}\mathfrak{g}_p$ is generated by $h^p-h, x^p, y^p, x^iy^j$
where $ ni+mj=0\mod p$ and $0<i, j<p.$ On the other hand the center of $ \mathfrak{U}\mathfrak{g}$
is trivial, hence $Z(\mathfrak{U}\mathfrak{g}_p)\neq Z_p(\mathfrak{g}_p).$
Meanwhile, the center of $D(\mathfrak{g})$ is generated by $x^my^{-n}$
which does generate $D(\mathfrak{g}_p)$ over $Z_p(\mathfrak{g}_p)$.
\end{remark}


\noindent\textbf{Acknowledgement:} I am very grateful to Lewis Topley for email correspondence.

\end{document}